\documentclass[11pt,leqno]{amsproc}
\usepackage{graphicx}
\usepackage{amsmath,amssymb,amsthm,mathtools}
\usepackage{mathrsfs}
\usepackage{enumitem}
\usepackage{microtype}
\usepackage{hyperref}
\usepackage{url}

\newtheorem{theorem}{Theorem}[section]
\newtheorem{proposition}[theorem]{Proposition}
\newtheorem{lemma}[theorem]{Lemma}
\newtheorem{corollary}[theorem]{Corollary}
\newtheorem{rem}[theorem]{Remark}
\newtheorem*{unremark}{Remark}
\theoremstyle{definition}
\newtheorem{definition}[theorem]{Definition}

\newcommand{\R}{\mathbb R}
\newcommand{\Max}{\operatorname{Max}}
\newcommand{\Sc}{\Sigma}
\newcommand{\cA}{\mathcal A}

\newcommand{\II}{\begin{itemize}}

\newcommand{\III}{\end{itemize}}

\begin{document}

\title[A bounded complete dcpo model of the cocountable real line]
{The set of real numbers with the cocountable topology\\ has a bounded complete dcpo model}

\author[D. Zhao]{Zhao Dongsheng}
\address[D. Zhao]{Mathematics and Mathematics Education, National Institute of
	Education, Nanyang Technological University, 1 Nanyang Walk, Singapore 637616}
\email{dongsheng.zhao@nie.edu.sg}

\author[C. Shen]{Shen Chong}
\address[C. Shen]{School of Science\\ Beijing University of Posts and Telecommunications\\ Beijing, PR China; Key Laboratory of Mathematics
	and Information Networks, Beijing University of Posts and Telecommunications, Beijing, China}
\email{shenchong0520@163.com}
\author[X. Xi]{Xi Xiaoyong}
\address[X. Xi]{School of Mathematics and Statistics\\
Jiangsu Normal University\\ Jiangsu, China} \email{littlebrook@jsnu.edu.cn}


\date{\today}

\begin{abstract}
We construct a bounded complete dcpo $P$ whose maximal point space is homeomorphic to the set $\R$ of all reals equipped with the cocountable topology, thereby answering an open problem. Using this dcpo, we obtain a new complete lattice whose Scott space is non-sober.
\end{abstract}

\subjclass[2000]{06B35, 06B30, 54A05}
\keywords{dcpo; bounded complete poset; Scott topology;
maximal point space; cocountable topology; almost disjoint family}
\maketitle

\medskip

\section{Introduction}
In domain theory, one tries to represent topological spaces as spaces of maximal points of posets, equipped with
the topology of interest. The most commonly used topologies on posets are the Scott and Lawson topologies.
A \emph{dcpo model} of a topological space $X$ is a dcpo $P$ for
which $\Max(P)$, with the subspace topology inherited from the Scott space $\Sc(P)$ of $P$, is
homeomorphic to $X$.

For each $T_1$ space $X$, Zhao and Xi constructed a dcpo model of $X$
\cite{XiZhao2018}.  Bounded completeness is substantially more restrictive:
Zhao and Xi proved that the positive integers with the cofinite topology do
not admit a bounded complete dcpo model, while many Hausdorff spaces (such as Hausdorff $k$-spaces) do
\cite{ZhaoXi2018}.  In the same paper they explicitly left open the case of
the real line with the cocountable topology. One motivation for this question is to determine whether every coherent and well-filtered space is homeomorphic to the maximal point space of a bounded complete dcpo (noting that the set $\R$ of all real numbers with the cocountable topology $\tau_{cc}$ is coherent and well-filtered).  The purpose of this paper is to
settle that case. We shall call $\R_{cc}=(\R, \tau_{cc})$ the cocountable real line.

\begin{theorem}[Main theorem]\label{thm:main}
The cocountable real line $\R_{\mathrm{cc}}$ has a bounded complete dcpo
model.
\end{theorem}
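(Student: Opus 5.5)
The plan is to build $P$ explicitly as $P=\R\cup Q$, where $\R$ will be the set of maximal elements and $Q$ is a ``scaffolding'' of non-maximal elements. There are two design goals. First, every countable $C\subseteq\R$ should be $\Max(P)\cap F$ for some Scott closed $F$; this makes countable sets closed in $\Max(P)$. Second, any Scott closed set containing uncountably many maximal points should contain all of them; this makes the only closed sets the countable ones and $\R$. I would use an almost disjoint family to make the down-sets of distinct reals interact only in a ``small'' way. Concretely, I would fix an injective assignment $x\mapsto A_x$ from $\R$ into an almost disjoint family of subsets of a fixed index set $I$. I would then let $Q$ consist of pairs $(\alpha,E)$, where $\alpha$ ranges over countable ordinals and $E$ is a countable subset of $I$, ordered componentwise (or lexicographically, if componentwise fails bounded completeness). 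Finally I would put $(\alpha,E)\le x$ iff $E\subseteq A_x$, or a comparable condition tying $E$ to $A_x$.

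\textbf{Step 1: order-theoretic properties.} I would check that $P$ is a dcpo and that every element of $\R$ is maximal. I would also check bounded completeness: if a subset of $P$ has an upper bound, then it has a least upper bound. Upper bounds in $\R$ force all the $E$'s into a common $A_x$, and almost disjointness controls when two reals can share a lower bound. The crucial feature is that directed families in $Q$ of countable length stay in $Q$. By contrast, $\omega_1$-chains whose $E$-components exhaust too much of $I$ have either no upper bound at all or exactly the sup we adjoin. I must verify that the meet/join computations never require a sup that is a nonmaximal element above two different reals.

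\textbf{Step 2: Scott closed sets.} Given a countable $C\subseteq\R$, I would take $F=\mathord{\downarrow}C$ and prove it is closed under directed sups. A directed subset of $\mathord{\downarrow}C$ is essentially countably generated relative to $C$. Its sup lies below some $x\in C$, using that a countable union of the almost disjoint pieces $A_x$, $x\in C$, cannot cover $A_y$ for $y\notin C$. Conversely, let $F$ be Scott closed with $F\cap\R$ uncountable, and let $y\in\R$ be arbitrary. I would construct a directed subset of $F$ whose sup is $y$, built as an $\omega_1$-chain. Uncountability of $F\cap\R$ lets me choose, at each stage $\alpha<\omega_1$, a point $x_\alpha\in F\cap\R$ and an element below $x_\alpha$ that approximates $y$ a bit further. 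This shows $y\in F$, so $F\supseteq\R$, and the subspace topology on $\Max(P)$ is exactly $\tau_{cc}$.

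\textbf{Main obstacle.} I expect the hardest step to be reconciling the last argument with bounded completeness. The $\omega_1$-chain must approach $y$ using pieces taken from below \emph{different} reals, which requires enough overlap between the down-sets of distinct reals. Bounded completeness, however, forbids the overlaps from creating incompatible pairs with multiple minimal upper bounds. This tension is exactly what kills the cofinite-integer case, where only countably many points are available to feed a chain. I would first try to make the approximants for $y$ depend only on a countable ``trace'' of $A_y$, realized by $\omega_1$ many distinct reals. Choosing the almost disjoint family so that every countable subset of each $A_y$ is contained in $A_x$ for uncountably many $x$ would then supply the chain. I would then verify that this richness does not produce pairs of elements with two incomparable minimal upper bounds.
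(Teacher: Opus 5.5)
Your proposal has a genuine gap: it never fixes the order on $P$, and the concrete choices you suggest do not work. Take $Q$ to be pairs $(\alpha,E)$ ordered componentwise, with $(\alpha,E)\le x$ iff $E\subseteq A_x$. Then $\{(\alpha,\varnothing):\alpha<\omega_1\}$ is a directed chain with no upper bound in $Q$, and every real is an upper bound of it. Since the reals are pairwise incomparable, the chain has no supremum, so $P$ is not a dcpo. Your fallback that some $\omega_1$-chains have ``no upper bound at all'' is excluded for the same reason: in a dcpo every directed set has a supremum.

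More generally, suppose the rule deciding $(\alpha,E)\le x$ depends only on $E$. Then the chain $\{(\alpha,E):\alpha<\omega_1\}$ forces exactly one real above each $E$, so no element of $Q$ lies below two distinct reals. In that case $\mathord{\downarrow}X$ is Scott closed for every $X\subseteq\R$, and $\Max(P)$ is discrete.

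Your richness condition also fails. If, as in Komj\'ath's theorem, the $A_x$ are countably infinite with finite pairwise intersections, then no infinite subset of $A_y$ lies in any $A_x$ with $x\neq y$. And ``uncountably many $x$'' is the wrong quantifier: $F\cap\R$ can be any uncountable set, so a fixed shared trace would have to be shared by co-countably many $x$. But if every $\{e\}$ with $e\in A_y$ were shared by co-countably many $x$, intersecting these countably many co-countable sets gives some $x\neq y$ with $A_y\subseteq A_x$. So approximants of $y$ cannot be fixed traces of $A_y$; they have to be chosen inside the given uncountable set.

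The missing idea, as the paper does it, is to separate order-completeness from the topology.
\begin{itemize}
\item Komj\'ath's family, split into continuum many branches, gives a compact $T_1$ topology $\tau\subseteq\tau_{\mathrm{cc}}$ with a fan property. For each $p$, each uncountable $B\subseteq\R\setminus\{p\}$ and each countable $C\subseteq\R\setminus\{p\}$, there are closed cones $K_n=\{p\}\cup(S\setminus F_n)$ with $S\subseteq B\setminus C$, decreasing to $\{p\}$.
\item $P$ consists of pairs $(C,\Lambda)$, where $C$ is a nonempty $\tau$-closed set and $\Lambda$ is a set of countable subsets of $\R\setminus C$, ordered by reverse inclusion and inclusion. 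Suprema are intersections and unions, so compactness gives the dcpo property and bounded completeness. There is no tension from overlapping down-sets.
\item The Scott-open sets $U_A=\{(C,\Lambda):A\in\Lambda\}$ make countable sets closed in $\Max(P)$, whose points are $m_p=(\{p\},[\R\setminus\{p\}]^{\leq\omega})$.
\item For the converse, no $\omega_1$-chains are needed. If a Scott-closed set contains $m_b$ for all $b$ in an uncountable $B$, fix $p\notin B$ and a finite label $\Lambda$. The fan chosen inside $B\setminus\bigcup\Lambda$ gives a countable chain $(K_n,\Lambda)$, each term below some $m_b$ with $b\in B$, with supremum $(\{p\},\Lambda)$. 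Then $m_p$ is the directed supremum of these elements over all finite $\Lambda$.
\end{itemize}
So the approximants depend on $B$, and the approximation runs in two stages; both features are what your scheme lacks.
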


In the construction, we shall use  the  classical
set-theoretic result of Komj\'ath \cite{Komjath1984} (see also the exposition
of Soukup \cite{Soukup2010}): there is an almost disjoint family of countably
infinite subsets of a set of cardinality $\mathfrak c$ which refines every
uncountable subset.  We use the family to define a compact topology $\tau$ on $\R$.
We then use the $\tau$-closed sets of this compact topology to define the required bounded complete dcpo $P$, which is a model of $\R_{cc}$.

We shall also show that the Scott space of the constructed dcpo $P$ is non-sober and that adjoining a new top element and a new bottom element to $P$ yields a complete lattice whose Scott space is non-sober. Compared with the non-sober complete lattices in \cite{miao-2023,Isbell-1982}, our construction is more straightforward because it is built from subsets of $\R$.

\section{Preliminaries}\label{sec:prelim}

Throughout, $\omega=\{0,1,2,\ldots\}$, $\omega_1$ is the first uncountable
cardinal, and $\mathfrak c=|\R|=|2^\omega|$ denotes the cardinality of the
continuum.  For a set $M$ and a cardinal $\kappa$, we use the standard notation
\[
\begin{aligned}
[M]^\kappa&=\{A\subseteq M:|A|=\kappa\},\\
[M]^{\leq\kappa}&=\{A\subseteq M:|A|\leq\kappa\}.
\end{aligned}
\]
In particular,
\[
\begin{aligned}
[M]^\omega&=\{A\subseteq M:A\text{ is countably infinite}\},\\
[M]^{\leq\omega}&=\{A\subseteq M:A\text{ is finite or countably infinite}\},
\end{aligned}
\]
where the empty set and all finite sets belong to $[M]^{\leq\omega}$, and
\[
[M]^{<\omega}=\{A\subseteq M:A\text{ is finite}\}.
\]
Here ``countable'' means cardinality at most $\aleph_0$; thus a countable
set may be finite, while a member of $[M]^\omega$ is required to be
infinite.  We write $2=\{0,1\}$, $2^n$ for the set of binary sequences of
length $n$, and
\[
2^{<\omega}=\bigcup_{n<\omega}2^n,
\qquad
2^\omega=\{\xi:\omega\to 2\}.
\]
If $\xi\in 2^\omega$, then $\xi\mathord{\upharpoonright}n\in 2^n$ is its
restriction to the first $n$ coordinates.  We identify $\R$ with $2^\omega$
through a fixed bijection whenever a real number is used as a binary branch.

Let $(P,\leq)$ be a poset.  A nonempty subset $D\subseteq P$ is \emph{directed}
if for every $d_1,d_2\in D$ there is $d_3\in D$ with
$d_1\leq d_3$ and $d_2\leq d_3$.  A poset is a \emph{dcpo} if every directed
subset has a supremum.  It is \emph{bounded complete} if every nonempty
upper-bounded subset has a supremum.

For $A\subseteq P$, an element $u\in P$ is an upper bound of $A$ if
$a\leq u$ for all $a\in A$.  The supremum $\bigvee A$, when it exists, is
the least upper bound. A subset $U\subseteq P$ is an \emph{upper set} if
$x\in U$ and $x\leq y$ imply $y\in U$.

The \emph{Scott topology} $\Sc(P)$ on a poset $P$ consists of the upper sets
$U$ such that, whenever $D\subseteq P$ is directed and $\bigvee D\in U$,
then $D\cap U\neq\varnothing$.  The set of maximal elements of $P$ is denoted by
$\Max(P)$.

A \emph{dcpo model} of a topological space $X$ is a dcpo $P$ for
which $\Max(P)$, with the subspace topology inherited from $\Sc(P)$, is
homeomorphic to $X$.

For more information about the Scott and Lawson topologies, see
\cite{Gierz2003,GoubaultLarrecq2013}.

For an infinite set $M$, the \emph{cocountable topology} on $M$ is
\[
\tau_{\mathrm{cc}}(M)=\{\varnothing\}\cup
\{U\subseteq M:M\setminus U\text{ is countable}\}.
\]
We write $M_{\mathrm{cc}}$ for this space.

The space $M_{\mathrm{cc}}$ is $T_1$ and hyperconnected.  It is not compact
when $M$ is uncountable. In fact, the compact subsets of $M_{\mathrm{cc}}$ are exactly the finite subsets.

\begin{definition}
Let $M$ be a set.  A family $\mathcal F\subseteq[M]^\omega$ is
\emph{almost disjoint} (AD) if $A\cap B$ is finite whenever $A,B\in\mathcal F$
are distinct.  It is \emph{dense for uncountable subsets} if every
uncountable $B\subseteq M$ contains a member of $\mathcal F$.
\end{definition}
\begin{theorem}[Komj\'ath]\label{thm:komjath}
There is an AD family $\cA\subseteq[\R]^\omega$ of cardinality $\mathfrak c$
such that every uncountable $B\subseteq\R$ contains some $A\in\cA$.
\end{theorem}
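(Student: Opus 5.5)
The plan is to build $\cA$ by a transfinite recursion of length $\mathfrak c$, using the usual Euclidean topology of $\R$ purely as a bookkeeping device. Each member of $\cA$ will be a sequence converging to its own limit point, and distinct members will have distinct limits. Almost disjointness then comes for free from the Hausdorff property.

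\emph{Step 1 (reduction to countable dense-in-itself sets).} Let $\mathcal X$ be the family of all countably infinite sets $X\subseteq\R$ that are dense-in-themselves, meaning no point of $X$ is isolated in $X$ in the Euclidean topology. Since $|[\R]^\omega|=\mathfrak c$, we can enumerate $\mathcal X=\{X_\alpha:\alpha<\mathfrak c\}$.

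Now let $B\subseteq\R$ be uncountable. Let $K$ be the set of points of $B$ that are condensation points of $B$. Since $\R$ is second countable, $B\setminus K$ is countable, so $K$ is uncountable. Moreover, every point of $K$ is a condensation point of $K$, so $K$ is dense-in-itself. Take a countable dense subset $X$ of $K$. Then $X$ is infinite and dense-in-itself, so $X=X_\alpha$ for some $\alpha$, and $X_\alpha\subseteq B$. It therefore suffices to find, for each $\alpha$, some $A_\alpha\subseteq X_\alpha$ such that the family $\{A_\alpha:\alpha<\mathfrak c\}$ is almost disjoint.

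\emph{Step 2 (the recursion).} The closure $\overline{X_\alpha}$ of $X_\alpha$ is a nonempty perfect subset of $\R$, so $|\overline{X_\alpha}|=\mathfrak c$. Suppose distinct limit points $y_\beta$ have been chosen for $\beta<\alpha$. Since $|\alpha|<\mathfrak c$, we can pick $y_\alpha\in\overline{X_\alpha}\setminus\{y_\beta:\beta<\alpha\}$.

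Because $\overline{X_\alpha}$ has no isolated points and $X_\alpha$ is dense in it, $y_\alpha$ lies in the closure of $X_\alpha\setminus\{y_\alpha\}$. Hence there is an injective sequence in $X_\alpha\setminus\{y_\alpha\}$ converging to $y_\alpha$. Let $A_\alpha$ be its (countably infinite) range.

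\emph{Step 3 (verification).} Take $\beta\neq\alpha$, so $y_\beta\neq y_\alpha$. Choose disjoint Euclidean neighbourhoods $U\ni y_\alpha$ and $V\ni y_\beta$. All but finitely many points of $A_\alpha$ lie in $U$, and all but finitely many points of $A_\beta$ lie in $V$. Hence $A_\alpha\cap A_\beta$ is finite.

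In particular $A_\alpha\neq A_\beta$, so $|\cA|=\mathfrak c$. Together with Step 1, every uncountable $B$ contains some $A_\alpha$.

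The main obstacle is the one Step 1 resolves. There are $2^{\mathfrak c}$ uncountable sets, so a recursion cannot handle them directly. One must find only $\mathfrak c$ countable "witnesses" such that every uncountable set contains one of them. One must also keep enough freedom at each stage, namely $\mathfrak c$ possible limit points, so that fewer than $\mathfrak c$ earlier choices never block the construction. The condensation-point argument and the perfectness of $\overline{X_\alpha}$ are what supply both requirements.
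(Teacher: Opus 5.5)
Your proof is correct, and it differs from the paper because the paper gives no proof at all. Theorem~\ref{thm:komjath} is imported as a black box from Komj\'ath's combinatorial work and Soukup's exposition, and the remark after it passes through the $[\R]^{\omega_1}$ formulation.

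You instead give a self-contained ZFC argument in which the Euclidean topology is only scaffolding:
\begin{itemize}
\item the condensation-point argument replaces the $2^{\mathfrak c}$ uncountable sets by the $\mathfrak c$ countable dense-in-themselves sets;
\item perfectness of $\overline{X_\alpha}$ leaves $\mathfrak c$ candidate limits at each of the fewer than $\mathfrak c$ earlier stages;
\item the Hausdorff property turns ``distinct limits'' into almost disjointness.
\end{itemize}
The last point also makes the $A_\alpha$ pairwise distinct, so $|\cA|=\mathfrak c$ even if the enumeration of $\mathcal X$ had repetitions. Your route also handles an arbitrary uncountable $B$ directly, with no reduction to subsets of size $\omega_1$.

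What your route buys is elementarity and extra structure: every member of $\cA$ is a Euclidean convergent sequence with its own limit, and distinct members have distinct limits. What the citation buys is brevity and the general combinatorial setting of the cited literature. Your method, by contrast, only works for index sets that embed in $\R$, that is, sets of size at most $\mathfrak c$. The paper only ever uses the index set $\R$, and only uses almost disjointness plus density for uncountable sets (in Lemma~\ref{lem:branch}). So that limitation is irrelevant here, and your argument could replace the citation outright.
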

\begin{rem}

Komj\'ath's formulation is often stated by saying that $\cA$ refines
$[\R]^{\omega_1}$.  In ZFC, every uncountable subset contains an
$\omega_1$-sized subset, so this is equivalent to the form used here.
The original reference is \cite{Komjath1984}; a detailed modern discussion
of dense families below the continuum is given in \cite{Soukup2010}.
\end{rem}

\section{A cross-almost-disjoint detector system}\label{sec:detectors}
We first strengthen Theorem~\ref{thm:komjath} by splitting every member into
continuum-many branches.
\begin{lemma}\label{lem:branch}
There is a family
\[
\{\mathcal S_p:p\in\R\},\qquad
\mathcal S_p\subseteq[\R\setminus\{p\}]^\omega,
\]
with the following properties:
\begin{enumerate}[label=(\roman*)]
\item any two distinct members of $\bigcup_{p\in\R}\mathcal S_p$ have finite
intersection;
\item for every $p\in\R$, every uncountable
$B\subseteq\R\setminus\{p\}$, and every countable
$C\subseteq\R\setminus\{p\}$, there is $S\in\mathcal S_p$ such that
$S\subseteq B\setminus C$.
\end{enumerate}
\end{lemma}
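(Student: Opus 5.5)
The plan is to refine Komj\'ath's family from Theorem~\ref{thm:komjath} by splitting each of its members along the branches of a binary tree. Each countably infinite set then yields $\mathfrak c$ many pairwise almost disjoint infinite subsets, one branch for each $p\in\R$. No further set theory beyond Theorem~\ref{thm:komjath} should be needed.

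\emph{Construction.} Fix $\cA\subseteq[\R]^\omega$ as in Theorem~\ref{thm:komjath}. For each $A\in\cA$, fix a bijection $e_A\colon 2^{<\omega}\to A$, which exists since both sets are countably infinite. Using the fixed identification of $\R$ with $2^\omega$, for $p\in\R$ put
\[
A_p=\{e_A(p\mathord{\upharpoonright}n):n<\omega\}.
\]
This is an infinite subset of $A$, because the initial segments $p\mathord{\upharpoonright}n$ are pairwise distinct and $e_A$ is injective. Then define
\[
\mathcal S_p=\{A_p\setminus\{p\}:A\in\cA\}\subseteq[\R\setminus\{p\}]^\omega.
\]

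\emph{Verification of (i).} Take members $S=A_p\setminus\{p\}$ and $S'=A'_q\setminus\{q\}$ arising from pairs $(A,p)\neq(A',q)$.
\begin{itemize}
\item If $A\neq A'$, then $S\cap S'\subseteq A\cap A'$, which is finite because $\cA$ is almost disjoint.
\item If $A=A'$ and $p\neq q$, let $k$ be the first coordinate where $p$ and $q$ differ. Since $e_A$ is injective, $A_p\cap A_q=e_A[\{p\mathord{\upharpoonright}n:n\le k\}]$, which is finite.
\end{itemize}
Thus sets coming from distinct index pairs have finite intersection. Since they are infinite, they are also distinct as sets. Hence any two distinct members of $\bigcup_p\mathcal S_p$ come from distinct index pairs, and so have finite intersection.

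\emph{Verification of (ii).} Let $B\subseteq\R\setminus\{p\}$ be uncountable and $C\subseteq\R\setminus\{p\}$ be countable. Then $B\setminus C$ is uncountable, so by Theorem~\ref{thm:komjath} there is $A\in\cA$ with $A\subseteq B\setminus C$. Consequently $S=A_p\setminus\{p\}\in\mathcal S_p$ satisfies $S\subseteq A\subseteq B\setminus C$.

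\emph{Main point of care.} The only real choice is how to index the splitting. It must use the parameter $p$ itself as the branch, so that every $A\in\cA$ contributes to every $\mathcal S_p$; this is exactly what makes (ii) hold for all $p$ simultaneously. At the same time, it must keep cross-almost-disjointness, which is what the tree-branch trick supplies. Removing $p$ costs at most one point, so it affects neither infiniteness nor (i).
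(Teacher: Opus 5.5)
Your proposal is correct and takes essentially the same approach as the paper. Both proofs split each member of Komj\'ath's family $\cA$ along binary branches via $e_A\colon 2^{<\omega}\to A$, use $p$ itself as the branch, and delete $p$; (i) is then checked by the cases $A\neq A'$ and $A=A'$, $p\neq q$, and (ii) by applying Theorem~\ref{thm:komjath} to $B\setminus C$. The only cosmetic differences are that you take $e_A$ to be a bijection where the paper uses an injection, and you compute $A_p\cap A_q$ exactly rather than just bounding it.
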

\begin{proof}
For each $A\in\cA$, both $A$ and $2^{<\omega}$ are countably infinite.
Choose an injection $e_A:2^{<\omega}\to A$.  We do not need the range of
$e_A$ to be all of $A$; the injection is used only to make different binary
branches meet at finitely many levels.  For $\xi\in2^\omega$ define the
branch
\[
A^\xi=\{e_A(\xi\mathord{\upharpoonright}n):n<\omega\}.
\]
Because $e_A$ is injective, every $A^\xi$ is countably infinite.  If
$\xi\neq\eta$, let $r$ be the first coordinate at which they differ.  For
every $n>r$ the finite sequences $\xi\mathord{\upharpoonright}n$ and
$\eta\mathord{\upharpoonright}n$ are distinct, hence their $e_A$-images are
distinct.  Therefore a common point can occur only among the first $r+1$
levels, and $A^\xi\cap A^\eta$ is finite.  Fix a bijection
$\beta:\R\to2^\omega$ and, for $p\in\R$, put
\[
\mathcal S_p=\{A^{\beta(p)}\setminus\{p\}:A\in\cA\}.
\]
Deleting $p$ removes at most one element, so every member of
$\mathcal S_p$ remains countably infinite and is disjoint from $\{p\}$.

Consider two supports
$S=A^{\beta(p)}\setminus\{p\}$ and
$T=A'^{\beta(q)}\setminus\{q\}$.  We first record that distinct parameter
pairs $(A,p)\neq(A',q)$ give distinct supports.  If $A\neq A'$, equality
$S=T$ would imply that the infinite set $S$ is contained in
$A\cap A'$, contradicting almost disjointness.  If $A=A'$ and $p\neq q$,
then equality would make the two branch sets have an infinite common subset,
contradicting the finite-intersection conclusion above.  Thus the supports
are genuinely different whenever their parameter pairs are different.

If $A\neq A'$, then $S\cap T\subseteq A\cap A'$, which is finite.  If
$A=A'$ but $p\neq q$, the branch calculation again gives $S\cap T$ finite.
This proves (i), including supports carrying different distinguished points.

Now let $B$ and $C$ be as in (ii).  The set $B\setminus C$ is uncountable.
By Theorem~\ref{thm:komjath}, choose $A\in\cA$ with
$A\subseteq B\setminus C$.  Every branch of $A$ is a subset of $A$, so in
particular $A^{\beta(p)}\setminus\{p\}\subseteq B\setminus C$.  Therefore this set
is the required member of $\mathcal S_p$.
\end{proof}
\section{The compact auxiliary topology}\label{sec:auxiliary}
For $p\in\R$, $S\in\mathcal S_p$, and finite $F\subseteq S$, define the
\emph{cone}
\[
K(p,S,F)=\{p\}\cup(S\setminus F).
\]
Let
\[
\mathscr K=\{\R\}\cup\{\{x\}:x\in\R\}\cup
\{K(p,S,F):p\in\R,\ S\in\mathcal S_p,\ F\in[S]^{<\omega}\}.
\]
We define $\tau$ to be the topology on $\R$ for which
\[
\{\R\setminus K:K\in\mathscr K\}
\]
is a subbase of open sets.
\begin{proposition}\label{prop:tau}
The space $(\R,\tau)$ is compact and $T_1$, and
\[
\tau\subseteq\tau_{\mathrm{cc}}.
\]
Equivalently, every proper $\tau$-closed subset of $\R$ is countable.
\end{proposition}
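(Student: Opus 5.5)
The plan is to verify the three assertions in turn: $T_1$, the inclusion $\tau\subseteq\tau_{\mathrm{cc}}$, and compactness. Compactness is the substantive part, and I would derive it from Alexander's subbase lemma together with Lemma~\ref{lem:branch}(i).

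\emph{$T_1$.} Every singleton $\{x\}$ belongs to $\mathscr K$, so $\R\setminus\{x\}$ is a subbasic open set. Hence singletons are $\tau$-closed and $(\R,\tau)$ is $T_1$.

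\emph{$\tau\subseteq\tau_{\mathrm{cc}}$.} Every member of $\mathscr K$ other than $\R$ is countable, since a cone $K(p,S,F)$ is contained in $\{p\}\cup S$ and $S$ is countable. A $\tau$-closed set is an intersection of finite unions of members of $\mathscr K$. A finite union of such members is either $\R$ (when some member is $\R$) or countable. An intersection of such unions is $\R$ if every union is $\R$, and otherwise it lies inside a countable set. So every proper $\tau$-closed set is countable. Consequently every nonempty $\tau$-open set has countable complement, which is exactly $\tau\subseteq\tau_{\mathrm{cc}}$; the two formulations are equivalent by taking complements.

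\emph{Compactness.} By Alexander's subbase lemma it suffices to show the following: if $\Phi\subseteq\mathscr K$ has the finite intersection property, then $\bigcap\Phi\neq\varnothing$.
\begin{itemize}
\item We may discard $\R$ from $\Phi$, since it does not affect the intersection.
\item If $\Phi$ contains a singleton $\{x\}$, the finite intersection property forces $x$ to lie in every member of $\Phi$, so $x\in\bigcap\Phi$.
\item Otherwise $\Phi$ consists only of cones. Suppose all cones in $\Phi$ carry the same pair $(p,S)$. Each such cone contains $p$, so $p\in\bigcap\Phi$.
\item The remaining case is that $\Phi$ contains two cones $K(p,S,F)$ and $K(q,T,G)$ with $(p,S)\neq(q,T)$. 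I first claim $S\neq T$. The proof of Lemma~\ref{lem:branch} shows that distinct parameter pairs $(A,p)$ give distinct supports, so a support determines its distinguished point. Lemma~\ref{lem:branch}(i) then makes $S\cap T$ finite. Hence
\[
K(p,S,F)\cap K(q,T,G)\subseteq\{p,q\}\cup(S\cap T)=:E,
\]
and $E$ is finite.
\end{itemize}

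In the last case, every finite subfamily of $\Phi$ containing these two cones has nonempty intersection inside $E$. The traces $\{K\cap E:K\in\Phi\}$ form a family of subsets of the finite set $E$, and this family has the finite intersection property. Because $E$ has only finitely many subsets, $\bigcap\Phi$ equals the intersection of finitely many members of $\Phi$, and that intersection is nonempty.

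I expect the main obstacle to be this final case: two cones with different supports. The whole point of the detector system is that distinct supports are almost disjoint, so that any two such cones already intersect in a finite set and compactness reduces to the finite case. The one subtle check is that a single support $S$ cannot occur in two families $\mathcal S_p$ and $\mathcal S_q$ with $p\neq q$. If it did, the cones $K(p,S,\varnothing)$ and $K(q,S,\varnothing)$ would meet in the infinite set $S$, and the reduction to a finite set would fail. I would rule this out by citing the injectivity of $(A,p)\mapsto A^{\beta(p)}\setminus\{p\}$, which was recorded in the proof of Lemma~\ref{lem:branch}.
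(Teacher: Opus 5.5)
Your proof is correct. The $T_1$ part and the $\tau\subseteq\tau_{\mathrm{cc}}$ part coincide with the paper's, but your compactness argument takes a different and shorter route.

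Both proofs apply Alexander's lemma to a centered family of cones. Both rest on the same fact: Lemma~\ref{lem:branch}(i), together with the injectivity of $(A,p)\mapsto A^{\beta(p)}\setminus\{p\}$, makes any two cones with different pairs meet in a finite set.

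The paper argues by contradiction. It enumerates one cone $K_0=\{x_0,x_1,\ldots\}$ and picks $L_n$ in the family omitting $x_n$. It then recursively chooses infinitely many distinct points in $K_0\cap\bigcap_{i\le r_n}L_i$, which shows that $K_0$ and all the $L_n$ pairwise meet in infinite sets. The contrapositive of the finite-intersection fact then forces a common pair $(p,S)$, and the apex $p=x_m$ contradicts $p\notin L_m$.

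You split the cases directly instead. If all cones share a pair, the common apex lies in the intersection. If two cones have different pairs, $\bigcap\Phi$ is confined to a finite set $E$. On $E$ a centered family has only finitely many distinct traces, so its intersection is nonempty.

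Your version replaces the recursive construction by a pigeonhole observation and locates the common point explicitly. The paper's version avoids the case split, at the cost of more bookkeeping.

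The subtlety you flagged is essential to the result itself, not only to your reduction. Suppose some $S$ lay in $\mathcal S_p\cap\mathcal S_q$ with $p\neq q$, and let $F_n$ be finite sets increasing to $S$. Then the cones $K(p,S,F_n)$ and $K(q,S,F_n)$ would form a centered family with empty intersection, and $\tau$ would not be compact.
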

\begin{proof}
The complements of the singleton members of $\mathscr K$ are open, so every
singleton is $\tau$-closed and $(\R,\tau)$ is $T_1$.  Every member of
$\mathscr K$ other than $\R$ is countable: a cone is the union of one point
and a cofinite subset of a countably infinite set.  By the definition of the
topology, every $\tau$-closed set has the form
\[
F=\bigcap_{i\in I}\bigl(K_{i,1}\cup\cdots\cup K_{i,n_i}\bigr),
\qquad K_{i,j}\in\mathscr K.
\]
Here is the small point that is sometimes hidden in this notation.  The
closed subbasic sets are the members of $\mathscr K$, and arbitrary closed
sets are intersections of finite unions of these sets.  If $F\neq\R$, then at
least one finite union in the displayed representation is a proper subset of
$\R$; otherwise every factor would be $\R$ and their intersection would be
$\R$.  A proper finite union cannot contain the member $\R$ of $\mathscr K$,
so it is a finite union of singletons and cones and is therefore countable.
Consequently $F$ is a subset of a countable set.  This proves that every
proper $\tau$-closed set is countable.  Conversely, this is exactly the
statement that every nonempty $\tau$-open set has countable complement, i.e.
$\tau\subseteq\tau_{\mathrm{cc}}$.

To prove compactness, we use Alexander's subbase theorem in the
closed-subbase form. Here the complements of the members of
$\mathscr K$ form the chosen open subbase, so $\mathscr K$ is the associated
closed subbase.  A centered family with members containing a singleton $\{x\}$ already
has nonempty intersection, because every member of the family must meet
$\{x\}$ and hence must contain $x$.  The member $\mathbb R$, if present, can
be discarded without changing an intersection.  Suppose therefore that
$\mathcal F\subseteq\mathscr K$ consists only of cones and has empty
intersection.
Choose one cone $K_0\in\mathcal F$ and enumerate it without repetitions as
$K_0=\{x_0,x_1,\ldots\}$.  For each $n$, the total
intersection is empty, so choose $L_n\in\mathcal F$ with $x_n\notin L_n$.

We recursively choose distinct points $y_n$.  First, choose $y_0\in K_0\cap L_0$
and let $k_0$ be the unique index such that $y_0=x_{k_0}$.
For $n\geq 1$, having chosen
$y_0,\ldots,y_{n-1}$, for each $j<n$ let $k_j$ be the unique index satisfying
$x_{k_j}=y_j$.  Choose $r_n\ge n$ larger than all these finitely many
indices.  The finite subfamily
$\{K_0,L_0,\ldots,L_{r_n}\}$ is centered, so it has a point in its
intersection; choose
\[
y_n\in K_0\cap\bigcap_{i\leq r_n}L_i.
\]
This point is different from every earlier $y_j$: since $k_j\le r_n$, we
have $y_n\in L_{k_j}$, whereas the defining choice of $L_{k_j}$ gives
$y_j=x_{k_j}\notin L_{k_j}$.

For fixed $i,j$, every $y_n$ with $n\geq\max\{i,j\}$ lies in
$L_i\cap L_j$.  Hence $L_i\cap L_j$ is infinite.  Write
$L_i=K(p_i,S_i,F_i)$ and $L_j=K(p_j,S_j,F_j)$.  If the pairs
$(p_i,S_i)$ and $(p_j,S_j)$ were different, then $S_i$ and $S_j$ would be
distinct by the injectivity observation in the proof of
Lemma~\ref{lem:branch}.  Property (i) of that lemma would therefore make
$S_i\cap S_j$ finite.  Since
\[
L_i\cap L_j\subseteq (S_i\cap S_j)\cup\{p_i,p_j\},
\]
the intersection $L_i\cap L_j$ would be finite, contradicting the
infinitude proved above.  Hence $(p_i,S_i)=(p_j,S_j)$ for every $i,j$; there
is a single pair $(p,S)$ common to all members $L_n$.

For every $n$, the intersection $K_0\cap L_n$ contains all $y_m$ with
$m\ge n$, and is therefore infinite.  If the pair attached to $K_0$ were
different from $(p,S)$, cross-almost-disjointness would make
$K_0\cap L_n$ finite, a contradiction.  Thus $K_0$ also has pair $(p,S)$.
In particular $p\in K_0$.  Let $m$ satisfy $x_m=p$.  By construction,
$p=x_m\notin L_m$, whereas every cone with pair $(p,S)$ contains its apex
$p$.  This contradiction proves that every centered subfamily of
$\mathscr K$ has nonempty intersection, and Alexander's theorem now gives
compactness of $\tau$.
\end{proof}

\begin{proposition}[Fan property]\label{prop:fan}
For every $p\in\R$, every uncountable
$B\subseteq\R\setminus\{p\}$, and every countable
$C\subseteq\R\setminus\{p\}$, there is a decreasing sequence of nonempty
$\tau$-closed sets $(K_n)_{n<\omega}$ such that
\[
p\in K_n,\qquad K_n\cap C=\varnothing,\qquad
K_n\cap B\neq\varnothing,\qquad \bigcap_{n<\omega}K_n=\{p\}.
\]
\end{proposition}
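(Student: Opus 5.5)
We apply Lemma~\ref{lem:branch}(ii) to the given $p$, $B$, $C$. This yields a support $S\in\mathcal S_p$ with $S\subseteq B\setminus C$. Since $S$ is countably infinite, we enumerate it without repetitions as $S=\{s_0,s_1,s_2,\ldots\}$. For each $n$ we then take the finite set $F_n=\{s_0,\ldots,s_{n-1}\}\in[S]^{<\omega}$ and put
\[
K_n=K(p,S,F_n)=\{p\}\cup\{s_m:m\geq n\}.
\]

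We check the required properties in turn. Each $K_n$ belongs to $\mathscr K$, so $\R\setminus K_n$ is a subbasic open set and $K_n$ is $\tau$-closed. It is nonempty because it contains $p$. Since $F_n\subseteq F_{n+1}$, we have $K_{n+1}\subseteq K_n$, so the sequence is decreasing. The apex condition $p\in K_n$ holds by definition of a cone.

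For the condition involving $C$: $S\cap C=\varnothing$ because $S\subseteq B\setminus C$, and $p\notin C$ because $C\subseteq\R\setminus\{p\}$. Hence $K_n\cap C=\varnothing$. For the condition involving $B$: $K_n\supseteq S\setminus F_n$, which is infinite and contained in $B$, so $K_n\cap B\neq\varnothing$. Finally, every $s_m$ is removed from $K_n$ once $n>m$, and $p\notin S$ because $S\subseteq\R\setminus\{p\}$. Therefore $\bigcap_n K_n=\{p\}$.

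No step is a genuine obstacle: Lemma~\ref{lem:branch}(ii) was designed for exactly this purpose. The only points needing care are that $p\notin S$, which is what makes the intersection exactly $\{p\}$, and that $p\notin C$, which is what keeps $C$ disjoint from every $K_n$.
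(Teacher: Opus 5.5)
Your proof is correct and is essentially the paper's argument. You take $S\in\mathcal S_p$ with $S\subseteq B\setminus C$ from Lemma~\ref{lem:branch}(ii), enumerate it, form the cones $K_n=\{p\}\cup(S\setminus\{s_0,\ldots,s_{n-1}\})$, and check the four conditions just as the paper does. (Minor aside: $\bigcap_n K_n=\{p\}$ would hold even without $p\notin S$, because $\bigcap_n(S\setminus F_n)=\varnothing$ and $p$ lies in every cone anyway; $p\notin S$ only matters for your wording that each $s_m$ is actually removed.)
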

\begin{proof}
Choose $S\in\mathcal S_p$ with $S\subseteq B\setminus C$ by
Lemma~\ref{lem:branch}.  Fix an enumeration $S=\{s_0,s_1,\ldots\}$ and
put $F_n=\{s_k:k<n\}$.  Then
\[
K_n=\{p\}\cup(S\setminus F_n)
\]
is one of the designated subbasic closed sets.  Since
$F_n\subseteq F_{n+1}$, the sequence is decreasing.  It contains $p$ and
meets $B$ because $S\subseteq B$; it misses $C$ because
$p\notin C$ and $S\cap C=\varnothing$.  Finally, every $s_k$ is removed
from $K_n$ once $n>k$, so the only point that belongs to all $K_n$ is $p$.
\end{proof}
\section{The pair-label bounded-complete dcpo}\label{sec:dcpo}
Let
\[
\mathcal C=\{C\subseteq\R:C\neq\varnothing
\text{ and }C\text{ is $\tau$-closed}\}.
\]
For $C\in\mathcal C$, write
\[
E_C=[\R\setminus C]^{\leq\omega}.
\]
Define
\[
P=\{(C,\Lambda):C\in\mathcal C,\ \Lambda\subseteq E_C\},
\]
with order
\[
(C,\Lambda)\leq(D,\Gamma)
\quad\Longleftrightarrow\quad
D\subseteq C\ \text{and}\ \Lambda\subseteq\Gamma.
\tag{5.1}
\]
The second coordinate is a set of countable subsets of $\R$; it is not itself
required to be countable.

\begin{theorem}\label{thm:bc}
The poset $P$ is a bounded complete dcpo.
\end{theorem}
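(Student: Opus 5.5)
The plan is to show that suprema in $P$ are computed coordinatewise, namely
\[
\bigvee_{i\in I}(C_i,\Lambda_i)=\Bigl(\bigcap_{i\in I}C_i,\ \bigcup_{i\in I}\Lambda_i\Bigr),
\]
provided the first coordinate is nonempty. The only real work is ensuring that nonemptiness.

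First I record that (5.1) is a partial order on $P$. It is the product of reverse inclusion on $\mathcal C$ and inclusion on sets of countable subsets, so reflexivity, antisymmetry and transitivity are immediate.

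Next I prove a general lemma. Let $\{(C_i,\Lambda_i):i\in I\}\subseteq P$ be nonempty and put $C=\bigcap_i C_i$ and $\Lambda=\bigcup_i\Lambda_i$. If $C\neq\varnothing$, then $(C,\Lambda)\in P$ and it is the least upper bound of the family.
\begin{itemize}
\item Membership: $C$ is an intersection of $\tau$-closed sets, hence $\tau$-closed, so $C\in\mathcal C$. Since $C\subseteq C_i$, we have $\R\setminus C_i\subseteq\R\setminus C$, hence $E_{C_i}\subseteq E_C$. Therefore $\Lambda\subseteq E_C$.
\item Upper bound: $(C,\Lambda)$ is an upper bound by (5.1).
\item Least: if $(D,\Gamma)$ is any upper bound, then $D\subseteq C_i$ and $\Lambda_i\subseteq\Gamma$ for every $i$. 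Hence $D\subseteq C$ and $\Lambda\subseteq\Gamma$, that is, $(C,\Lambda)\le(D,\Gamma)$.
\end{itemize}

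Bounded completeness then follows at once. If a nonempty family has an upper bound $(D,\Gamma)$, then $\varnothing\neq D\subseteq\bigcap_i C_i$, so the lemma applies.

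For the dcpo property, let the family be directed. For any $i,j$ there is $k$ with $(C_i,\Lambda_i),(C_j,\Lambda_j)\le(C_k,\Lambda_k)$, so $C_k\subseteq C_i\cap C_j$. Iterating, every finite subfamily of $\{C_i\}$ contains some $C_k\neq\varnothing$ in its intersection. Thus $\{C_i\}$ is a family of $\tau$-closed sets with the finite intersection property. By the compactness of $(\R,\tau)$ in Proposition~\ref{prop:tau}, $\bigcap_i C_i\neq\varnothing$, and the lemma applies again.

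The main obstacle is exactly this nonemptiness of the intersection for directed sets. It is the only point where the special topology matters, and it is handled entirely by the compactness already established via Alexander's subbase theorem. Everything else is routine bookkeeping with the definition of $E_C$.
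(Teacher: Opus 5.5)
Your proof is correct and takes essentially the same route as the paper: suprema are computed coordinatewise as $\bigl(\bigcap_i C_i,\bigcup_i\Lambda_i\bigr)$, and compactness of $(\R,\tau)$ makes the intersection nonempty for directed families. The one small streamlining is that in the upper-bounded case you take nonemptiness directly from $\varnothing\neq D\subseteq\bigcap_i C_i$, so you need compactness only for directed sets, whereas the paper passes through the finite intersection property and compactness in both cases.
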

\begin{proof}
First, (5.1) is a partial order.  Reflexivity is immediate.  If
$(C,\Lambda)\leq(D,\Gamma)$ and $(D,\Gamma)\leq(E,\Delta)$, then
$E\subseteq D\subseteq C$ and $\Lambda\subseteq\Gamma\subseteq\Delta$,
so transitivity holds.  If both inequalities hold, then $C=D$ and
$\Lambda=\Gamma$, proving antisymmetry.

The least element is $(\R,\varnothing)$: for every $(C,\Lambda)\in P$ we
have $(\R,\varnothing)\leq(C,\Lambda)$.  Let
\[
\mathcal B=\{(C_i,\Lambda_i):i\in I\}
\]
be a nonempty upper-bounded family, with upper bound $(D,\Gamma)$.  Then
$D\subseteq C_i$ for every $i$: this is exactly the first-coordinate
condition in $(C_i,\Lambda_i)\le(D,\Gamma)$.  Consequently, for every finite
collection $i_1,\ldots,i_k\in I$,
\[
D\subseteq C_{i_1}\cap\cdots\cap C_{i_k},
\]
so the closed sets $C_i$ have the finite-intersection property.  Compactness
of $(\R,\tau)$ gives
\[
C^*=\bigcap_{i\in I}C_i\neq\varnothing.
\]
Because an arbitrary intersection of closed sets is closed, $C^*$ belongs to
$\mathcal C$.
If $A\in\Lambda_i$, then $A\subseteq\R\setminus C_i$.  Since
$C^*\subseteq C_i$, we have $\R\setminus C_i\subseteq\R\setminus C^*$,
and hence $A\cap C^*=\varnothing$.  Thus every label occurring in any
$\Lambda_i$ is admissible for the limiting trace, and the union of the labels
is admissible:
\[
\Lambda^*=\bigcup_{i\in I}\Lambda_i\subseteq E_{C^*}.
\]
We claim that
\[
\bigvee\mathcal B=(C^*,\Lambda^*).
\]
It is an upper bound by (5.1): $C^*\subseteq C_i$ and
$\Lambda_i\subseteq\Lambda^*$ for every $i$.  If $(D',\Gamma')$ is another
upper bound, then $D'\subseteq C_i$ and $\Lambda_i\subseteq\Gamma'$ for
every $i$; hence $D'\subseteq C^*$ and $\Lambda^*\subseteq\Gamma'$, so
$(C^*,\Lambda^*)\leq(D',\Gamma')$.  Thus it is the least upper bound.

    Now let $D_0\subseteq P$ be directed.  We first check carefully that
the first coordinates have the finite-intersection property.  For one
element this is trivial.  Suppose that an element of $D_0$ lies above a
given finite list $(C_1,\Lambda_1),\ldots,(C_k,\Lambda_k)$.  Given one
more element $(C_{k+1},\Lambda_{k+1})$, directedness supplies an element
of $D_0$ above both this common upper bound and
$(C_{k+1},\Lambda_{k+1})$.  By transitivity it lies above all $k+1$
elements.  Induction therefore gives, for every finite subfamily of $D_0$,
an element $(C',\Lambda')\in D_0$ above that subfamily.  In particular,
$C'\subseteq C_j$ for every member of the finite subfamily, so the finite
intersection is nonempty.  Compactness and the label-union argument now
apply again.  Explicitly,
\[
\bigvee D_0=\left(\bigcap_{(C,\Lambda)\in D_0}C,
\bigcup_{(C,\Lambda)\in D_0}\Lambda\right).
\]
The intersection is nonempty by compactness and the union is admissible for
the same reason as above.  Hence $P$ is a dcpo and, by the first part,
bounded complete.
\end{proof}
\begin{unremark}
The upper-bound assumption is used exactly once: it supplies the nonempty
closed trace $D\subseteq C_i$ for every $i$.  Without an upper bound, the
intersection of the first coordinates could be empty, and the resulting pair
would not belong to $P$, since traces in $\mathcal C$ are required to be
nonempty.  Thus the proof establishes precisely bounded completeness.
\end{unremark}
\begin{proposition}\label{prop:max}
\[
\Max(P)=\{m_p:p\in\R\},\qquad
m_p=(\{p\},E_{\{p\}}).
\]
\end{proposition}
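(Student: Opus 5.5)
We prove the two inclusions separately. Throughout we use that, by Proposition~\ref{prop:tau}, every singleton $\{p\}$ is $\tau$-closed. Hence $m_p=(\{p\},E_{\{p\}})$ is a genuine element of $P$, since $E_{\{p\}}=[\R\setminus\{p\}]^{\leq\omega}$.

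\emph{Each $m_p$ is maximal.} Suppose $m_p\leq(D,\Gamma)$. Then $D\subseteq\{p\}$, and $D\neq\varnothing$ because $D\in\mathcal C$; so $D=\{p\}$. Consequently $\Gamma\subseteq E_D=E_{\{p\}}$. On the other hand $E_{\{p\}}\subseteq\Gamma$ by (5.1). Therefore $(D,\Gamma)=m_p$, and $m_p$ is maximal.

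\emph{Every maximal element is some $m_p$.} Take an arbitrary $(C,\Lambda)\in P$. Since $C$ is nonempty, choose $p\in C$. We check that $(C,\Lambda)\leq m_p$. The first coordinate is fine because $\{p\}\subseteq C$. For the second, let $A\in\Lambda$. Then $A$ is countable and $A\subseteq\R\setminus C\subseteq\R\setminus\{p\}$, so $A\in E_{\{p\}}$. Hence $\Lambda\subseteq E_{\{p\}}$. Thus every element of $P$ lies below some $m_p$. If $(C,\Lambda)$ is maximal, this forces $(C,\Lambda)=m_p$.

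Finally, distinct points give distinct elements, since $p\neq q$ implies $\{p\}\neq\{q\}$. So $p\mapsto m_p$ is a bijection from $\R$ onto $\Max(P)$. No step here is a real obstacle. The only points needing care are that singletons are $\tau$-closed (the $T_1$ part of Proposition~\ref{prop:tau}) and that the nonemptiness requirement in $\mathcal C$ is exactly what rules out $D=\varnothing$ in the maximality argument.
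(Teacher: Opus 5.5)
Your proof is correct and follows essentially the same route as the paper: you show that every $(C,\Lambda)$ lies below $m_p$ for any $p\in C$, so maximal elements are among the $m_p$, and that anything above $m_p$ equals $m_p$ because $D$ is nonempty and $\Gamma\subseteq E_{\{p\}}$. Your version is slightly tighter: you conclude directly from $(C,\Lambda)\le m_p$ where the paper adds a redundant label-enlargement step, and you explicitly check $m_p\in P$ using the $T_1$ part of Proposition~\ref{prop:tau}.
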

\begin{proof}
For any $(C,\Lambda)\in P$ choose $p\in C$.  Since
$\R\setminus C\subseteq\R\setminus\{p\}$, every member of $\Lambda$ is a
countable subset avoiding $p$; equivalently,
$\Lambda\subseteq E_C\subseteq E_{\{p\}}$.  Hence
$(C,\Lambda)\leq(\{p\},E_{\{p\}})=m_p$.  A maximal element must therefore
have a singleton first coordinate.  If that coordinate is $\{p\}$ but the
label is a proper subset of $E_{\{p\}}$, choose
$A\in E_{\{p\}}\setminus\Lambda$ and enlarge the label to
$\Lambda\cup\{A\}$; this is still an element of $P$ and is strictly above
$(\{p\},\Lambda)$.  Hence every maximal element is some $m_p$.  This
argument also shows why both coordinates must be saturated at a maximal
point: if the trace still contains two points, or if one admissible countable
label is missing, one can move strictly upward.

Conversely, suppose $m_p\leq(D,\Gamma)$.  The order forces the nonempty
closed set $D$ to satisfy $D\subseteq\{p\}$, hence $D=\{p\}$, and it also
forces $E_{\{p\}}\subseteq\Gamma$.  Since $(D,\Gamma)\in P$ gives the
opposite inclusion, $(D,\Gamma)=m_p$.
\end{proof}

\section{The maximal point space of $P$}\label{sec:scott}
\begin{lemma}\label{lem:markers}
For every countable $A\subseteq\R$, the set
\[
U_A=\{(C,\Lambda)\in P:A\in\Lambda\}
\]
is Scott-open, and
\[
U_A\cap\Max(P)=\{m_p:p\notin A\}.
\]
\end{lemma}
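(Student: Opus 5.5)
The plan is to verify the two defining properties of Scott-openness directly from the order (5.1) and the explicit description of directed suprema in Theorem~\ref{thm:bc}. Then the trace on $\Max(P)$ will be read off from Proposition~\ref{prop:max}.

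\emph{Upper set.} Suppose $(C,\Lambda)\in U_A$ and $(C,\Lambda)\le(D,\Gamma)$. Then $\Lambda\subseteq\Gamma$ by (5.1). Hence $A\in\Gamma$, and so $(D,\Gamma)\in U_A$.

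\emph{Inaccessibility by directed suprema.} Let $D_0\subseteq P$ be directed with $\bigvee D_0\in U_A$. By the proof of Theorem~\ref{thm:bc}, the second coordinate of $\bigvee D_0$ is exactly $\bigcup_{(C,\Lambda)\in D_0}\Lambda$. So $A\in\Lambda$ for some $(C,\Lambda)\in D_0$, and this element lies in $D_0\cap U_A$. This is the only step with any content. It works because labels are joined by plain union, with no closure operation, so membership of a single label is a ``finitary'' property.

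\emph{The trace.} By Proposition~\ref{prop:max}, every maximal point is $m_p=(\{p\},E_{\{p\}})$ with $E_{\{p\}}=[\R\setminus\{p\}]^{\le\omega}$. Since $A$ is countable, $A\in E_{\{p\}}$ holds if and only if $A\subseteq\R\setminus\{p\}$, that is, if and only if $p\notin A$. Therefore $U_A\cap\Max(P)=\{m_p:p\notin A\}$.

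There is no real obstacle in this argument. The one point to check carefully is that the supremum formula from Theorem~\ref{thm:bc} is used for the directed set itself, and not merely for some upper bound of it.
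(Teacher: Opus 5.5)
Your proof is correct and follows the paper's argument essentially step for step. You get the upper-set property from $\Lambda\subseteq\Gamma$, Scott inaccessibility from the fact that the label of a directed supremum is the plain union of the labels (Theorem~\ref{thm:bc}), and the trace on $\Max(P)$ from $A\in E_{\{p\}}\iff p\notin A$.
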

\begin{proof}
First, $U_A$ is an upper set: if $A\in\Lambda$ and
$(C,\Lambda)\leq(D,\Gamma)$, then $\Lambda\subseteq\Gamma$, so
$A\in\Gamma$.  Let $\mathcal D$ be directed and suppose its supremum lies
in $U_A$.  By the supremum formula, the label of the supremum is the union
of the labels in $\mathcal D$.  More explicitly, the directed-supremum
formula from Theorem~\ref{thm:bc} says that the second coordinate of
$\bigvee\mathcal D$ is exactly
$\bigcup_{(C,\Lambda)\in\mathcal D}\Lambda$.  Thus $A$ belongs to the label
of some member of $\mathcal D$, proving Scott inaccessibility.  Finally,
\[
A\in E_{\{p\}}\iff A\cap\{p\}=\varnothing\iff p\notin A,
\]
which proves the asserted maximal trace.
\end{proof}
\begin{theorem}\label{thm:topology}
Under the bijection $p\mapsto m_p$, the subspace Scott topology on
$\Max(P)$ is $\tau_{\mathrm{cc}}$.
\end{theorem}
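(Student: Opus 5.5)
We prove the two inclusions between the topologies separately. Under the bijection $p\mapsto m_p$, write $\sigma$ for the topology on $\R$ obtained by transporting the subspace Scott topology of $\Max(P)$. We must show $\tau_{\mathrm{cc}}\subseteq\sigma$ and $\sigma\subseteq\tau_{\mathrm{cc}}$.

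\textbf{The inclusion $\tau_{\mathrm{cc}}\subseteq\sigma$.} This is immediate from Lemma~\ref{lem:markers}. A nonempty cocountable open set has the form $\R\setminus A$ with $A$ countable. The set $U_A$ is Scott-open, and $U_A\cap\Max(P)=\{m_p:p\notin A\}$ corresponds exactly to $\R\setminus A$. The empty set is trivially in $\sigma$.

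\textbf{The inclusion $\sigma\subseteq\tau_{\mathrm{cc}}$.} Let $U$ be Scott-open and put $W=\{p:m_p\in U\}$. Suppose $W\neq\varnothing$ and, for contradiction, that $B=\R\setminus W$ is uncountable. Fix $p\in W$; then $B\subseteq\R\setminus\{p\}$. We aim to write $m_p$ as a directed supremum of elements each lying below some $m_b$ with $b\in B$. Since $U$ is an upper set, any element of the directed set lying in $U$ would force $m_b\in U$, contradicting $b\in B$.

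The first coordinates will come from the fan property (Proposition~\ref{prop:fan}). The labels, however, need care. The label of $m_p$ is the huge set $E_{\{p\}}$, while an element below $m_b$ may only carry labels avoiding $b$.

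To handle this, index the directed set by pairs $(n,\Lambda)$, where $n<\omega$ and $\Lambda$ is a finite subset of $E_{\{p\}}$, ordered componentwise. Given such $\Lambda$, let $C_\Lambda=\bigcup\Lambda$, a countable set not containing $p$. Apply Proposition~\ref{prop:fan} with $B$ and $C_\Lambda$ to obtain a decreasing sequence $K^{\Lambda}_n$.

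The difficulty is that these sequences depend on $\Lambda$, so the resulting elements need not be directed. Intersecting the sequences preserves $p\in K$ and disjointness from the labels, but may destroy $K\cap B\neq\varnothing$. Two remedies are available.

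\emph{First remedy: make the fan uniform.} Choose $S\in\mathcal S_p$ with $S\subseteq B$ and no avoidance set at all. Set
\[
x_{n,\Lambda}=\bigl(\{p\}\cup(S\setminus(F_n\cup C_\Lambda)),\ \Lambda\bigr).
\]
This first coordinate is an intersection of the cone $\{p\}\cup(S\setminus F_n)$ with the set $\{p\}\cup(\R\setminus C_\Lambda)$. That set is not obviously $\tau$-closed, so this needs repair.

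\emph{Second remedy (the one we adopt).} Use cones $K(p,S,F)$ with $F$ finite, and exploit that labels must only avoid the trace. A countable $A\in\Lambda$ may meet $S$ in infinitely many points, so removing finitely many points is insufficient. Instead, pass to an element whose trace is $\{p\}\cup(S\setminus F)$ with $F$ finite but $S$ chosen to avoid $C_\Lambda$. Directedness then requires the traces for $\Lambda\subseteq\Lambda'$ to be nested, which in turn requires a single $S$.

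\textbf{The main obstacle.} Hence the key step, and the one we expect to be hardest, is showing that Scott-openness lets us exploit the countable labels one at a time. Concretely, we use directed sets of the form $\{(K_n,\Lambda)\}_n$ with $\Lambda$ fixed. For fixed finite $\Lambda\subseteq E_{\{p\}}$, the sequence $(K^\Lambda_n,\Lambda)$ is a chain with supremum $(\{p\},\Lambda)$. Every element of this chain lies below some $m_b$ with $b\in K^\Lambda_n\cap B$, since $\Lambda$ avoids $K^\Lambda_n\ni b$. So $(\{p\},\Lambda)\notin U$ for every finite $\Lambda$.

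Next, the finite subsets $\Lambda$ of $E_{\{p\}}$ form a directed family whose elements $(\{p\},\Lambda)$ have supremum $m_p\in U$. Scott-openness therefore yields some $(\{p\},\Lambda)\in U$, a contradiction.

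Thus $B$ is countable, and $W\in\tau_{\mathrm{cc}}$. The verification that each chain element is below an $m_b$ with $b\notin W$ is routine, following the order (5.1) and the proof of Proposition~\ref{prop:max}.
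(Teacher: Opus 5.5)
Your final argument is correct and is essentially the paper's proof. For each finite $\Lambda\subseteq E_{\{p\}}$ you use the fan chain $(K^\Lambda_n,\Lambda)$, whose supremum is $(\{p\},\Lambda)$, to show $(\{p\},\Lambda)\notin U$; you then take the directed family of these elements, whose supremum is $m_p\in U$, to reach the contradiction. Applying Proposition~\ref{prop:fan} to $B$ and $C_\Lambda$ directly, rather than to the paper's $B\setminus C_\Lambda$, is fine, since $K_n\cap C_\Lambda=\varnothing$ already gives $\Lambda\subseteq E_{\{b\}}$. The exploratory ``remedies'' with a single doubly indexed directed set are unnecessary, and the ``second remedy (the one we adopt)'' is in fact not the one you end up using, so you should delete them: the two-stage use of Scott-openness avoids the directedness problem entirely.
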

\begin{proof}
Lemma~\ref{lem:markers} gives one inclusion immediately.  The empty set is
already relative Scott-open.  If $U\subseteq\R$ is nonempty and
cocountable, write $A=\R\setminus U$.  Then $A$ is countable and
$U=\{p:p\notin A\}$ is the trace of $U_A$ under the identification
$p\leftrightarrow m_p$.  Thus every cocountable open set is a relative
Scott-open set.

For the converse, let $O$ be Scott-open and put
\[
V=\{p\in\R:m_p\in O\}.
\]
We prove that either $V=\varnothing$ or $\R\setminus V$ is countable.
Suppose, towards a contradiction, that $p\in V$ and that
$B=\R\setminus V$ is uncountable.  Fix an arbitrary finite label
$\Lambda\subseteq E_{\{p\}}$ and put
\[
C_\Lambda=\bigcup\Lambda.
\]
Each member of $\Lambda$ is countable, so $C_\Lambda$ is countable; and
$p\notin C_\Lambda$ because every member of $\Lambda$ avoids $p$.  Hence
$B\setminus C_\Lambda$ is uncountable.  Apply Proposition~\ref{prop:fan}
with $B'=B\setminus C_\Lambda$ and $C=C_\Lambda$.  This gives a decreasing
closed fan $(K_n)$ with
\[
p\in K_n,\quad K_n\cap C_\Lambda=\varnothing,\quad
K_n\cap B'\neq\varnothing,\quad\bigcap_nK_n=\{p\}.
\]
For every $n$, the condition $K_n\cap C_\Lambda=\varnothing$ implies
$\Lambda\subseteq E_{K_n}$, so the following are indeed elements of $P$.
The family
\[
D_\Lambda=\{(K_n,\Lambda):n<\omega\}
\]
is directed: if $n\leq m$, then $K_m\subseteq K_n$, and hence
$(K_n,\Lambda)\leq(K_m,\Lambda)$.  By Theorem~\ref{thm:bc}, its supremum
is
\[
h_\Lambda=(\{p\},\Lambda).
\]
Indeed, the intersection of the first coordinates is $\{p\}$ by the fan
property, while the union of the constant label coordinates is just
$\Lambda$.  The pair is therefore the supremum by the explicit formula in
Theorem~\ref{thm:bc}, not merely an upper bound.
We claim that $h_\Lambda\notin O$.  If it belonged to $O$, Scott
inaccessibility would give $n$ with $(K_n,\Lambda)\in O$.  Choose
$b\in K_n\cap B'$.  Thus every $A\in\Lambda$ avoids $b$, and
therefore
\[
(K_n,\Lambda)\leq m_b
\]
because $\{b\}\subseteq K_n$ and $\Lambda\subseteq E_{\{b\}}$.  The
upper-set property of $O$ gives $m_b\in O$, contradicting $b\in B$.

Thus no finite-label history $h_\Lambda$ belongs to $O$.  On the other hand,
the family of all finite-label histories is directed: given finite labels
$\Lambda_1$ and $\Lambda_2$, their union is finite and
$h_{\Lambda_1\cup\Lambda_2}$ is a common upper bound.  Its supremum is
\[
\bigvee_{\Lambda\in[E_{\{p\}}]^{<\omega}}h_\Lambda
=m_p.
\]
To see the last equality directly, the first coordinates are all $\{p\}$,
and every countable set in $E_{\{p\}}$ occurs in one of the finite labels,
namely in the singleton family containing it.  Thus the union of all these
finite labels is exactly $E_{\{p\}}$.
Since $m_p\in O$, Scott inaccessibility gives a finite $\Lambda$ with
$h_\Lambda\in O$, a contradiction.  Therefore $\R\setminus V$ is countable
whenever $V$ is nonempty.  Hence the maximal Scott topology is exactly
$\tau_{\mathrm{cc}}$.
\end{proof}
\begin{corollary}[Explicit homeomorphism]
The map
\[
\Phi:\R_{\mathrm{cc}}\longrightarrow \Max(P),\qquad
\Phi(p)=m_p=(\{p\},E_{\{p\}}),
\]
is a homeomorphism.  Consequently $P$, together with the identification
$p\leftrightarrow m_p$, is a bounded-complete dcpo model of
$\R_{\mathrm{cc}}$.
\end{corollary}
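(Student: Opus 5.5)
The corollary is essentially a repackaging of Proposition~\ref{prop:max} and Theorem~\ref{thm:topology}, so I will assemble those results rather than prove anything new. The steps are bijectivity of $\Phi$, then the homeomorphism, then the model statement.

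\emph{Bijectivity.} The map $\Phi$ is well defined: $\{p\}$ is nonempty and $\tau$-closed, since $(\R,\tau)$ is $T_1$ by Proposition~\ref{prop:tau}. Hence $m_p\in P$, and $m_p\in\Max(P)$ by Proposition~\ref{prop:max}. Surjectivity onto $\Max(P)$ is exactly the equality $\Max(P)=\{m_p:p\in\R\}$ from that proposition. Injectivity holds because the first coordinate of $m_p$ is $\{p\}$, so $m_p=m_q$ forces $p=q$.

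\emph{Homeomorphism.} Transporting the relative Scott topology on $\Max(P)$ back along $\Phi$ gives, by Theorem~\ref{thm:topology}, exactly $\tau_{\mathrm{cc}}$. Concretely:
\begin{itemize}
\item For continuity of $\Phi^{-1}$, i.e.\ openness of $\Phi$: every nonempty cocountable $U$ with countable complement $A$ satisfies $\Phi(U)=U_A\cap\Max(P)$, which is relatively Scott-open by Lemma~\ref{lem:markers}. Also $\Phi(\varnothing)=\varnothing$.
\item For continuity of $\Phi$: for any Scott-open $O$, the preimage $\Phi^{-1}(O\cap\Max(P))=V$ is either empty or cocountable, by the second half of the proof of Theorem~\ref{thm:topology}.
\end{itemize}
So $\Phi$ is a continuous open bijection, hence a homeomorphism.

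\emph{Model statement.} By Theorem~\ref{thm:bc}, $P$ is a bounded complete dcpo. Combined with the homeomorphism $\Max(P)\cong\R_{\mathrm{cc}}$, this matches the definition of a dcpo model, and also yields Theorem~\ref{thm:main}.

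The only step needing care is the continuity of $\Phi$, which relies on the fan construction inside Theorem~\ref{thm:topology}. Since that result is already established, I expect no real obstacle here. The one thing to check is that the identification used in Theorem~\ref{thm:topology} is the same map $\Phi$, which it is by definition.
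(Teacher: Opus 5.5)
Your proposal is correct and takes the same route as the paper: bijectivity comes from Proposition~\ref{prop:max}, and both continuity directions come from Theorem~\ref{thm:topology} (Lemma~\ref{lem:markers} for openness of $\Phi$, the fan argument for continuity). Your explicit checks of well-definedness and injectivity fill in details the paper leaves implicit.
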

\begin{proof}
Proposition~\ref{prop:max} says that $\Phi$ is bijective.  Theorem~\ref{thm:topology}
says that a subset $U\subseteq\R$ is cocountable-open exactly when
$\{m_p:p\in U\}$ is open in the Scott subspace on $\Max(P)$.  Thus both
$\Phi$ and $\Phi^{-1}$ are continuous.
\end{proof}

\begin{corollary}\label{cor:P-nonsober}
The Scott space of $P$ is not sober.
\end{corollary}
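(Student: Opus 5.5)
The plan is to exhibit an irreducible Scott-closed subset of $P$ that is not the closure of a single point. The natural candidate is the whole of $P$ minus a suitable open set, but the simplest choice is $P$ itself. We then show that $P$ is irreducible in $\Sc(P)$ and that $P$ is not the Scott closure of any point. Because $P$ has a least element, the closure of $\{x\}$ is ${\downarrow}x$. Equality ${\downarrow}x=P$ would make $x$ a top element, and $P$ has none, since by Proposition~\ref{prop:max} it has continuum-many distinct maximal points $m_p$. So $P$ is a closed set that is not the closure of a point, and it remains only to prove irreducibility.

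Irreducibility of $P$ amounts to the statement that any two nonempty Scott-open sets $O_1,O_2$ intersect. Here we reuse the key step of Theorem~\ref{thm:topology}: every nonempty Scott-open $O$ contains a maximal point. Indeed, if $x\in O$, then $x\le m_p$ for any $p$ in the first coordinate of $x$ (proof of Proposition~\ref{prop:max}), so $m_p\in O$. The set $V_i=\{p: m_p\in O_i\}$ is then nonempty, and Theorem~\ref{thm:topology} shows it is cocountable. Two cocountable subsets of $\R$ intersect, since $\R$ is uncountable. Any $p\in V_1\cap V_2$ gives $m_p\in O_1\cap O_2$.

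Thus $P$ is an irreducible closed set with no generic point, so $\Sc(P)$ is not sober. The only step needing care is that every nonempty open set meets $\Max(P)$ in a cocountable set. This is exactly what Theorem~\ref{thm:topology} provides, together with the fact that every element of $P$ lies below some maximal point, so I expect no real obstacle. As an alternative, one could use the closed set $P\setminus U$ for a suitable $U$, but taking $P$ itself is cleanest.
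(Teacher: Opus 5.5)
Your proof is correct and is essentially the paper's argument. Both show that $P$ is irreducible because every nonempty Scott-open set contains some $m_p$ and, by Theorem~\ref{thm:topology}, has cocountable maximal trace, and both note that $P$ has no top element, so $P$ is not the closure of a point. One small correction: the closure of a point in any Scott space is $\mathord{\downarrow}x$ simply because $\mathord{\downarrow}x$ is a Scott-closed lower set containing $x$, so appealing to a least element is beside the point.
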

\begin{proof}
Let $O_1$ and $O_2$ be nonempty Scott-open subsets of $P$.  For
$i\in\{1,2\}$, choose $(C_i,\Lambda_i)\in O_i$ and $p_i\in C_i$.
As in the proof of Proposition~\ref{prop:max},
$(C_i,\Lambda_i)\leq m_{p_i}$, so $m_{p_i}\in O_i$.  Thus each
$O_i\cap\Max(P)$ is nonempty.  By Theorem~\ref{thm:topology}, these two
maximal-point traces correspond to nonempty cocountable subsets of $\R$,
and hence they intersect.  Therefore $O_1\cap O_2\neq\varnothing$, so the
whole Scott space $P$ is an irreducible closed set.

In a Scott space, the closure of a point $x$ is the principal ideal
$\mathord\downarrow x$.  Since $P$ has distinct maximal elements $m_p$, it
has no greatest element.  Hence $P$ is not the closure of any point and is
therefore not sober.
\end{proof}

\begin{theorem}\label{thm:nonsober-lattice}
Let $L=P\cup\{\top,\bot\}$ be obtained by adjoining new elements $\top$ and
$\bot$ such that $\bot<x<\top$ for every $x\in P$.  Then $L$ is a complete
lattice and its Scott space is not sober.
\end{theorem}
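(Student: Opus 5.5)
We prove completeness first and then non-sobriety. \emph{Completeness.} Since $\bot$ is least and $\top$ is greatest, it suffices to produce the supremum of an arbitrary subset $X\subseteq L$. If $\top\in X$, then $\bigvee X=\top$. Otherwise put $X'=X\cap P$; if $X'=\varnothing$, then $\bigvee X=\bot$. If $X'\neq\varnothing$ and $X'$ has an upper bound in $P$, then Theorem~\ref{thm:bc} gives its supremum in $P$, and this element is also the supremum in $L$, because the only new upper bound is $\top$, which lies above everything. If $X'$ has no upper bound in $P$, then its only upper bound in $L$ is $\top$, so $\bigvee X=\top$. Hence every subset has a supremum and $L$ is a complete lattice.

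\emph{Non-sobriety.} The key step is to relate Scott-open sets of $L$ to those of $P$. We take the set $P\cup\{\top\}$, or more generally work with the closed set $F=P\cup\{\bot\}=L\setminus\{\top\}$.

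First we check that $\{\top\}$ is Scott-open in $L$. It is an upper set. Suppose $D$ is directed with $\bigvee D=\top$. If $\top\notin D$, then $D\setminus\{\bot\}\subseteq P$ is directed (or empty), and its supremum exists in $P$ by Theorem~\ref{thm:bc}. That supremum is also the supremum of $D$ in $L$, contradicting $\bigvee D=\top$. So $\top\in D$, and $F$ is Scott-closed.

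Next we show that $F$ is irreducible. Let $O_1,O_2$ be Scott-open in $L$ with each $O_i$ meeting $F$. Each $O_i$ is an upper set; if it contains $\bot$, then $O_i=L$. Otherwise $O_i\cap P\neq\varnothing$, and $O_i\cap P$ is Scott-open in $P$: it is an upper set in $P$, and a directed $D\subseteq P$ has the same supremum in $L$ as in $P$ by Theorem~\ref{thm:bc}. By the argument of Corollary~\ref{cor:P-nonsober}, two nonempty Scott-open subsets of $P$ intersect. Hence $O_1\cap O_2\cap F\neq\varnothing$ in every case, and $F$ is irreducible.

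Finally, $F$ is not the closure of a point. Closures of points in the Scott topology are principal ideals $\mathord\downarrow x$. The set $F$ is not of this form: $F$ has no greatest element, since $P$ contains distinct maximal elements $m_p$, and $F$ does not contain $\top$. Therefore $F$ is an irreducible closed set that is not a point closure, and $\Sc(L)$ is not sober.

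The main obstacle is the transfer of Scott-openness between $L$ and $P$. The needed fact is that directed suprema computed in $P$ agree with those computed in $L$. This holds because $P$ is a dcpo, so no directed subset of $P$ acquires a new supremum $\top$ in $L$.
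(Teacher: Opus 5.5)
Your proposal is correct and follows essentially the same proof as the paper: the same case analysis for completeness, the same Scott-closed set $F=P\cup\{\bot\}$, irreducibility reduced to the trace argument of Corollary~\ref{cor:P-nonsober}, and the observation that $F$ has no greatest element. The only difference is cosmetic: you show $F$ is Scott-closed by proving $\{\top\}$ is Scott-open, while the paper checks directly that $F$ is a lower set closed under directed suprema; the two are equivalent.
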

\begin{proof}
We first prove completeness.  Let $A\subseteq L$.  If
$A\subseteq\{\bot\}$, then $\bigvee A=\bot$.  If $\top\in A$, or if the
nonempty set $A\cap P$ has no upper bound in $P$, then $\bigvee A=\top$.
In every remaining case, $A\cap P$ is nonempty and upper-bounded in $P$, so
Theorem~\ref{thm:bc} gives
\[
\bigvee_L A=\bigvee_P(A\cap P).
\]
Thus every subset of $L$ has a supremum, and consequently $L$ is a complete
lattice.

Put $F=P\cup\{\bot\}$.  This is a lower set in $L$.  Let
$D\subseteq F$ be directed.  If $D\subseteq\{\bot\}$, then
$\bigvee D=\bot\in F$.  Otherwise, $D\cap P$ is a nonempty directed subset
of $P$, and its supremum in $P$ is also the supremum of $D$ in $L$.
Therefore $F$ is closed under directed suprema and is Scott-closed.

We next show that $F$ is irreducible.  Let $U$ and $V$ be Scott-open subsets
of $L$ that meet $F$.  If $\bot\in U$ or $\bot\in V$, then the corresponding
open set is all of $L$, and the conclusion is immediate.  Otherwise, both
$U$ and $V$ meet $P$.  The intersections $U\cap P$ and $V\cap P$ are
Scott-open in $P$ and, by the argument in Corollary~\ref{cor:P-nonsober},
their maximal-point traces are nonempty cocountable subsets of $\R$.
These traces intersect, so $U\cap V\cap F\neq\varnothing$.  Hence $F$ is
irreducible.

Finally, $F$ is not the closure of a point.  The closure of $\top$ is $L$,
whereas the closure of any $x\in F$ is $\mathord\downarrow x$, a proper
subset of $F$.  This is immediate for $x=\bot$; if $x\in P$, then the
principal ideal $\mathord\downarrow x$ contains at most one maximal element
of $P$, while $F$ contains all the distinct maximal elements $m_p$.  Thus
$F$ is an irreducible Scott-closed
set that is not the closure of any point, and the Scott space of $L$ is not
sober.
\end{proof}

\begin{rem}
Isbell constructed the first non-sober complete lattice
\cite{Isbell-1982}.  Miao et al. constructed a countable non-sober complete
distributive lattice \cite{miao-2023}.  Compared with these examples, our
construction is more straightforward because it is built from subsets of
$\R$.
\end{rem}


\begin{thebibliography}{99}
\small
\setlength{\itemsep}{-0.25em}
\bibitem{Engelking1989}
R.~Engelking,
\emph{General Topology},
Heldermann Verlag, Berlin, 1989.
\bibitem{Gierz2003}
G.~Gierz, K.~H.~Hofmann, K.~Keimel, J.~D.~Lawson, M.~Mislove and D.~S.~Scott,
\emph{Continuous Lattices and Domains},
Cambridge University Press, Cambridge, 2003.
\bibitem{GoubaultLarrecq2013}
J.~Goubault-Larrecq,
\emph{Non-Hausdorff Topology and Domain Theory: Selected Topics in Point-Set
Topology},
Cambridge University Press, Cambridge, 2013.

\bibitem{Isbell-1982}
J.~Isbell,
Completion of a construction of Johnstone,
\emph{Proc. Amer. Math. Soc.} \textbf{85} (1982), 333--334.

\bibitem{Komjath1984}
P.~Komj\'ath,
Dense systems of almost-disjoint sets,
in A.~Hajnal, L.~Lov\'asz and V.~T.~S\'os (eds.),
\emph{Finite and Infinite Sets}, Colloquia Mathematica Societatis
J\'anos Bolyai, vol.~37, North-Holland, 1984, pp.~527--536.
\bibitem{Lawson1997}
J.~D.~Lawson,
Spaces of maximal points,
\emph{Mathematical Structures in Computer Science} \textbf{7} (1997),
543--555.

\bibitem{miao-2023}
H.~Miao, X.~Xi, Q.~Li and D.~Zhao,
Not every countable complete distributive lattice is sober,
\emph{Mathematical Structures in Computer Science} \textbf{33} (2023),
no.~9, 809--831.
\url{https://doi.org/10.1017/S0960129523000269}
\bibitem{Soukup2010}
L.~Soukup,
Dense families of countable sets below $\mathfrak c$,
arXiv:1003.2496 (2010).
\url{https://arxiv.org/abs/1003.2496}.
\bibitem{XiZhao2017}
X.~Xi and D.~Zhao,
Well-filtered spaces and their dcpo models,
\emph{Mathematical Structures in Computer Science} \textbf{27} (2017),
507--515.
\url{https://doi.org/10.1017/S0960129515000171}.
\bibitem{XiZhao2018}
D.~Zhao and X.~Xi,
Directed complete poset models of $T_1$ spaces,
\emph{Mathematical Proceedings of the Cambridge Philosophical Society}
\textbf{164} (2018), 125--134.
\url{https://doi.org/10.1017/S0305004116000888}.
\bibitem{ZhaoXi2018}
D.~Zhao and X.~Xi,
On topological spaces that have a bounded complete dcpo model,
\emph{Rocky Mountain Journal of Mathematics} \textbf{48} (2018), no.~1,
141--156.
\url{https://doi.org/10.1216/RMJ-2018-48-1-141}.
\end{thebibliography}
\end{document}